\def\Nd{\EuScript{N}^1}
\def\oT{{[0{,}T]}}
\def\cal{\mathcal}
\renewcommand{\b}{\overline}
\newtheorem{thm}{Theorem}
\newtheorem{defi}{Definition}
\newtheorem{lem}{Lemma}
\newtheorem{pro}{Proposition}
\newtheorem{cor}{Corollary}
\newtheorem{rema}{Remark}
\def\cal{\mathcal}
\def\di{\displaystyle}
\def\sg{\sigma}
\newcommand{\N}{\mathbb{N}}
\newcommand{\R}{\mathbb{R}}
\newcommand{\C}{\mathbb{C}}
\newcommand{\bP}{\mathbb{P}}
\newcommand{\rL}{{\rm L}}
\begin{document}
\setcounter{tocdepth}{3}
\baselineskip 6mm
\title[The Stokes, Navier-Stokes and Euler equations]{Lagrangian structures for the Stokes, Navier-Stokes and Euler equations}
\author{Jacky Cresson}
\author{S\'ebastien Darses}

\maketitle

\vskip 8mm
\begin{abstract}
We prove that the Navier-Stokes, the Euler and the Stokes equations admit a Lagrangian structure using the stochastic embedding of Lagrangian systems. These equations coincide with extremals of an explicit stochastic Lagrangian functional, i.e. they are stochastic Lagrangian systems in the sense of \cite{cd}.
\end{abstract}

\begin{small}
\tableofcontents
\end{small}

\section{Introduction}

The current paper aims at finding out a Lagrangian structure for some partial differential equations including the Euler, the Stokes and the Navier-Stokes equations. By a Lagrangian partial differential equation we mean that we can be find an explicit Lagrangian function and an adapted variational calculus, for example the quantum, fractional or stochastic calculus of variation, such that the solutions of the PDE are critical points of the Lagrangian functional. This is the classical inverse problem for partial differential equations. 

The interest of such results is at least twofold:

- First of all, we now have an intrinsic object, the Lagrangian, which control the dynamical behaviour of the PDE. 

- Secondly, this intrinsic structure can be used to derive more adapted numerical schemes for these equations. This will be discussed in a forthcoming paper.\\

However, classical results in the study of incompressible fluids ensure us that there exists obstructions to obtain a variational principle from which the behaviour of the fluid derives. An analogous result exists for the derivation of a Lagrangian structure for non conservative mechanical systems (see Bateman \cite{bate}). However, these obstructions results are based on the fundamental assumption that we use the classical differential calculus. As a consequence, these obstructions may disappear if we use a different "differential" calculus by extending the underlying class of objects to stochastic processes for instance. 

Stochastic interpretations of the Navier-Stokes equations already exit in a huge literature. For instance, C. peskin gives in \cite{peskin} a random-walk interpretation based on a model where the force exerted by a molecule of the fluid is considered. In \cite{busnello} B. Busnello turns the Navier-Stokes equations into a system of functional integrals in the trajectory space of a suitable diffusion. In \cite{constantin_iyer}, the authors provide a stochastic representation where the drift term in a stochastic differential equation is implicitly computed as the expected value of an expression involving the flow it drives. This is a step towards a generalization for the Feynman-Kac theory for parabolic equations.

In the current paper, we use the stochastic embedding of Lagrangian systems developped in \cite{note1,cd} to relate the Navier-Stokes, the Euler and the Stokes equations to critical points of a stochastic Lagrangian. An embedding theory is the data of a functional space and an extension of the classical derivative on this set. This allows us to define natural extension of differential operators and differential equations. When dealing with Lagrangian systems an embedding allows us to keep track of the form of the equation but also of the underlying variational principle. Using embedding, we then obtain a strong relation between a classical Lagrangian dynamics and an embedded one, which will be the PDE in our case. 

The body of our paper is arranged as follows: We first remind definitions and results about the Nelson derivatives for stochastic processes and the stochastic embedding of Lagrangian systems developped in \cite{cd}. We then prove that the Navier-Stokes, the Euler and the Stokes equations coincide with extremals of stochastic Lagrangian systems.

\section{Notations and Reminder about Nelson stochastic derivatives and Time reversal}

\subsection{Notations}

Let $T>0$, $\nu>0$ and $d\in\N^*$. Let $\rL$ be the set of all measurable functions $f:\oT\times \R^d\to\R^d$ satisfying the following hypothesis: There exists $K>0$ such that for all $x,y\in\R^d$ :
    $\sup_t \left|f(t,x)-f(t,y)\right|\leq
    K\left|x-y\right|$ et
    $\sup_t \left|f(t,x)\right|\leq
    K(1+\left|x\right|)$.\\

We are given a probability space $(\Omega,\cal{A},\bP)$ on which a family $(W^{(b,\sg)})_{(b,\sg)\in{\rm L}\times\rL}$ of Brownian motions indexed by $\rL\times\rL$ is defined. If
$b,\sg\in\rL$, we denote by $\cal P^{(b,\sg)}$ the natural filtration 
associated to $W^{(b,\sg)}$. Let $\cal P$ be the filtration generated by the filtrations $\cal P^{(b,\sg)}$ where $(b,\sg)\in{\rm L}\times\rL$,
and we set: For $t\in\oT$,
$$\cal P_t=\bigvee_{(b,\sg)\in{\rm L}\times\rL}\cal P_t^{(b,\sg)}.$$

Let $F([0,T]\times \R^d)$ be the space of measurable functions defined on $[0,T]\times \R^d$ and let $F([0,T]\times \Omega)$ be the space of measurable stochastic processes defined on $[0,T]\times \Omega$.

Let us define the involution $\phi: F([0,T]\times \R^d)\to F([0,T]\times \R^d)$ such that 
for all $t\in[0,T]$ and $x\in\R^d$, $(\phi u)(t,x)=-u(T-t,x)$.
We also define the time-reversal involution on stochastic processes:  $r: F([0,T]\times \Omega)\to F([0,T]\times \Omega)$, $r(X)_t(\omega)=X_{T-t}(\omega)$.

It is convenient to use the bar symbol to denote these two involutions. We now agree to denote deterministic functions by small letters and stochastic processes by capital letters. 
So there will not be any confusion when using the bar symbol: $\b u:=\phi u$ and $\b X:=r(X)$.\\

The space $\R^d$ is endowed with its
canonical scalar product $\langle\cdot,\cdot\rangle$. Let $|\cdot|$
be the induced norm. If $f:\oT\times \R^d\to \R$ is a smooth function, we set
$\partial_jf=\frac{\partial f}{\partial x_j}$. We denote by $\nabla
f=(\partial_i f)_i$ the gradient of $f$ and by $\Delta f=\sum_j
\partial_j^2f$ its Laplacian. For a smooth map
$\Phi:\oT\times \R^d\to\R^d$, we denote by $\Phi^j$ its
$j^{th}$-component, by $(\partial_x \Phi)$ its differential which we
represent into the canonical basis of $\R^d$: $(\partial_x
\Phi)=(\partial_j\Phi^i)_{i,j}$, and by $\nabla\cdot \Phi=\sum_j
\partial_j\Phi^j$ its divergence. By convention, we denote by
$\Delta \Phi$ the vector $(\Delta \Phi^j)_j$. The notation $(\Phi\cdot \nabla)\Phi$ denotes the parallel derivative of $\Phi$ along itself, whose coordinates are: $((\Phi\cdot \nabla)\Phi)^i=\sum_j \Phi^j \partial_j \Phi^i$. The image of a vector
$u\in\R^d$ under a linear map $M$ is simply denoted by $Mu$, for
instance $(\partial_x \Phi)u$. A map
$a:\oT\times\R^d\to\R^d\otimes\R^d$ is viewed as a $d\times d$
matrix whose columns are denoted by $a_k$. Finally, we denote by
$\nabla\cdot a$ the vector $(\nabla\cdot a_k)_k$.\\

By the Navier-Stokes {\it equation} we mean the following equation:
\begin{equation}\label{ns}
\partial_t u + (u\cdot \nabla) u =\nu \Delta u -\nabla p,
\end{equation}
where $u:\oT\times \R^d\to\R^d$ is the velocity field and $p:\oT\times \R^d\to \R$ is the pressure.\\
By the Navier-Stokes {\it equations} we mean the following system:
\begin{equation}\label{ns_syst}
\partial_t u + (u\cdot \nabla) u =\nu \Delta u -\nabla p, \quad \nabla\cdot u=0.
\end{equation}
When talking about solutions of (\ref{ns}) (resp. (\ref{ns_syst})), we will always refer to a pair $(u,p)$ of strong solutions of (\ref{ns}) (resp. (\ref{ns_syst})). We distinguish the equation (\ref{ns}) from the system (\ref{ns_syst}) because in the current paper we are interested in the structure of (\ref{ns}). The system  (\ref{ns_syst}) turns out to have the same Lagrangian structure but in that case we have to embed this Lagrangian into a restricted class of diffusions processes, namely those with a divergence free drift.  

We agree to use the same convention for the Euler and the Stokes equations.

\subsection{Time reversal and Nelson derivatives}

\begin{defi}\label{bonnesdiffusions}
We denote by $\Lambda^1$ the space of all diffusions $X$
satisfying the following conditions:
\begin{itemize}
    \item[(i)] $X$ is a solution on $\oT$ of the SDE:
    $dX_t=b(t,X_t)dt+\sigma(t,X_t)dW^{(b,\sg)}_t,\quad X_0=X^0$\\
where $X^0\in L^2(\Omega)$ and $(b,\sg)\in\rL\times\rL$,

    \item[(ii)] For all $t\in (0,T)$, $X_t$ admits a density
    $p_t(\cdot)$,

    \item[(iii)] Setting $a^{ij}=(\sigma\sigma^*)^{ij}$, for all $i\in\{1,\cdots,n\}$, $t_0>0$,
    $$\int_{t_0}^T \int_{\R^d} \left|\partial_j(a^{ij}(t,x)p_t(x))\right|dxdt <
    +\infty,$$
    \item[(iv)] For all $i,j,t$, $$\frac{\partial_j(a^{ij}(t,\cdot)p_t(\cdot))}{p_t(\cdot)}\in {\rm
    L}.$$
\end{itemize}
\end{defi}

A diffusion verifying (i) will be called a $(b,\sg)$-diffusion and we denote it by $X^{(b,\sg)}$.

We denote by $\Lambda^1_v$ the closure of ${\rm Vect}(\Lambda^1)$ in
$L^1(\Omega\times[0,T])$ endowed with the usual norm $\|\cdot\|=E\int
|\cdot|$.

We know that the reversed process of any element $\Lambda^1$ is
still a Brownian diffusion driven by a Brownian motion
$\widehat{W}^{(b,\sg)}$ (\textit{cf} \cite{mns}). We denote
by $\widehat{\cal P}^{(b,\sg)}$ the natural filtration associated to
$\widehat{W}^{(b,\sg)}$. Let $\widehat{\cal P}$ be the filtration
defined by: For all $t\in\oT$,
$$\widehat{\cal P}_t=\bigvee_{(b,\sg)\in{\rm L}\times\rL}\widehat{\cal P}_t^{(b,\sg)}.$$
We finally consider the filtration $\cal F$ such that $\cal
F_t=\widehat{\cal P}_{T-t}$ for all $t\in\oT$.

\begin{pro}[Definition of
Nelson Stochastic derivatives]\label{derives_Nelson}
Let $X=X^{b,\sg}\in\Lambda^1$ with $a^{ij}=(\sigma\sigma^*)^{ij}$
and $a^j=(a^{1j},\cdots,a^{dj})$. For almost all $t\in (0,T)$, the Nelson stochastic derivatives exist in $L^2(\Omega)$ :
\begin{eqnarray}
DX_t: & = & \lim_{h\rightarrow 0^+} E\left[\frac{X_{t+h}-X_t}{h}\mid {\cal
P}_t \right]=b(t,X_t)\\
D_* X_t: & = & \lim_{h\rightarrow 0^+} E\left[\frac{X_t-X_{t-h}}{h}\mid
{\cal F}_t \right]=b(t,X_t)-\frac{1}{p_t(X_t)}\sum_j\partial_j(a^{j}(t,X_t)p_t(X_t)).
\end{eqnarray}
\end{pro}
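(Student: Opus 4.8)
The plan is to treat the two derivatives separately, obtaining the forward one directly from the SDE and reducing the backward one to a forward derivative of the time-reversed diffusion.

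For the forward derivative I would start from the integral form of the equation in Definition \ref{bonnesdiffusions}(i), writing $X_{t+h}-X_t=\int_t^{t+h}b(s,X_s)\,ds+\int_t^{t+h}\sg(s,X_s)\,dW^{(b,\sg)}_s$. Conditioning on $\cal P_t$, the stochastic integral contributes nothing: since $u\mapsto\int_0^u\sg(s,X_s)\,dW^{(b,\sg)}_s$ is a martingale with respect to $\cal P$ (the $(b,\sg)$-Brownian motion remaining a $\cal P$-Brownian motion by construction of the family), its increment has zero $\cal P_t$-conditional expectation. For the bounded-variation part I would invoke the a.s. continuity of $s\mapsto b(s,X_s)$, a consequence of path continuity together with the Lipschitz and growth bounds defining $\rL$, to get, for almost every $t$, the Lebesgue-differentiation limit $\frac1h\int_t^{t+h}b(s,X_s)\,ds\to b(t,X_t)$. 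The bound $|b(s,X_s)|\le K(1+|X_s|)$ and the standard $L^2$ estimate on $\sup_{s\le t+h}|X_s|$ upgrade this to convergence in $L^2(\Omega)$; since conditional expectation is an $L^2$-contraction and $b(t,X_t)$ is $\cal P_t$-measurable, this gives $DX_t=b(t,X_t)$.

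For the backward derivative, the key point is that the reversed process $\b X$, with $\b X_s=X_{T-s}$, is again a diffusion of the form in Definition \ref{bonnesdiffusions}, driven by $\c W^{(b,\sg)}$ with natural filtration $\c{\cal P}$ (this is the content of \cite{mns}). Writing $s=T-t$, so that $\cal F_t=\c{\cal P}_s$ and $X_t-X_{t-h}=-(\b X_{s+h}-\b X_s)$, the backward difference quotient for $X$ is exactly the negative of the forward difference quotient for $\b X$ conditioned on its own past $\c{\cal P}_s$. Applying the forward result already obtained to $\b X$ yields $D_*X_t=-\b b(T-t,X_t)$, where $\b b$ is the drift of $\b X$. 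Finally, the time-reversal formula of \cite{mns} identifies this drift as $\b b(s,x)=-b(T-s,x)+\frac{1}{p_{T-s}(x)}\sum_j\partial_j(a^j(T-s,x)p_{T-s}(x))$, and substituting $s=T-t$ produces the stated expression $D_*X_t=b(t,X_t)-\frac{1}{p_t(X_t)}\sum_j\partial_j(a^j(t,X_t)p_t(X_t))$.

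I expect the main obstacle to be the backward derivative, specifically the justification that $\b X$ is an admissible diffusion with an $L^2$-meaningful drift. This is exactly where conditions (iii) and (iv) of Definition \ref{bonnesdiffusions} enter: (iii) provides the integrability needed for the Haussmann--Pardoux/Millet--Nualart--Sanz time-reversal theorem to apply and for the correction term to be finite, while (iv) forces the reversed drift into $\rL$, so that $\b X$ is itself governed by a coefficient in the admissible class and the forward computation transfers verbatim. The forward part is routine once the martingale term is discarded; the delicate analytic input is entirely carried by the time-reversal result and by the density-regularity hypotheses that make the correction $\frac{1}{p_t}\sum_j\partial_j(a^jp_t)$ well defined along the paths of $X$.
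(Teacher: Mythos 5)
Your proposal is correct and is essentially the argument the paper relies on: Proposition \ref{derives_Nelson} is recalled here (from \cite{cd}) without proof, and the intended derivation is exactly yours --- the forward derivative read off the SDE after conditioning kills the martingale increment, and the backward derivative obtained by rewriting the backward difference quotient as a forward one for the reversed process $\b X$ and invoking the time-reversal theorem of \cite{mns}, for which hypotheses (iii) and (iv) of Definition \ref{bonnesdiffusions} supply precisely the required integrability and the membership of the reversed drift in $\rL$ (this is also what the paper's filtration setup $\cal F_t=\widehat{\cal P}_{T-t}$ is designed for). One minor repair: since elements of $\rL$ are only measurable in $t$, the map $s\mapsto b(s,X_s)$ need not be a.s. continuous, but your fallback of Lebesgue differentiation (applied to $s\mapsto b(s,X_s)$, which is integrable by the linear growth bound and the moment estimates on $X$) is valid at almost every $t$, which is exactly why the proposition is asserted only for almost all $t\in(0,T)$.
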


Therefore, for $X^{b,\sg}\in\Lambda^1$, there exists a measurable function $b_*$ such that $D_*X_t=b_*(t,X_t)$. We call it the left velocity field of $X$. It turns out to be an important objet for the sequel. Also, this field is related to the drift of the time reversed process $\b X$ through the following identity:
\begin{equation}
D \b X= \b {b_*}.
\end{equation}

We denote by $\Lambda^2=\{X\in\Lambda^1; DX,\ D_*X\in\Lambda^1\}$ and
$\Lambda^2_v$ the closure of ${\rm Vect}(\Lambda^2)$ in
$L^1(\Omega\times[0,T])$. We then define $\Lambda_v^k$ in an obvious way.

We denote by $\cal{D}_{\mu}$ the stochastic derivative introduced in
(\cite{note1} Lemme 1.2) and defined by 
\begin{equation}
{\cal D}_\mu = {D+D_* \over 2}
+\mu{D-D_* \over 2}, \quad \mu\in\{0,\pm 1,\pm i\}.
\end{equation}
We can extend $\cal D$ by $\C$-linearity to complex processes
$\Lambda^1_{\C}:=\Lambda_v^1\oplus i\Lambda_v^1$.

\begin{thm}
\label{derivfonc} Let $X^{(b,\sg)}\in \Lambda^1$, $a=\sigma\sigma^*$ and $f\in
C^{1,2}(I\times \R^d)$ such that $\partial_t f$, $\nabla f$ and
$\partial_{ij}f$ are bounded. We get:
\begin{equation}\label{deriv_fonc} 
\mathcal{D}_\mu f(t,X^{(b,\sg)}_t) = \left(\partial_t f +
\mathcal{D} X^{(b,\sg)}_t\cdot \nabla f
+\frac{\mu}{2}\sum_{k,j}a^{kj}\partial_{kj}f\right)(t,X^{(b,\sg)}_t) .
\end{equation}
\end{thm}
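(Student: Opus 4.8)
The plan is to compute the two one-sided Nelson derivatives $Df(t,X_t)$ and $D_*f(t,X_t)$ separately and then read off $\mathcal{D}_\mu f$ from the combination $\mathcal{D}_\mu=\frac{D+D_*}{2}+\mu\frac{D-D_*}{2}$. The whole point is that the two one-sided formulas are identical except for the sign in front of the second-order term $\frac{1}{2}\sum_{k,j}a^{kj}\partial_{kj}f$, so that this term cancels in the symmetric part $\frac{D+D_*}{2}$ and survives, doubled, in the antisymmetric part $\frac{D-D_*}{2}$.

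\textbf{Forward part.} First I would apply the classical It\^o formula to $f(t,X^{(b,\sg)}_t)$, which is licit since $f\in C^{1,2}$ with bounded $\partial_t f,\nabla f,\partial_{ij}f$ and since $b,\sigma\in\rL$ satisfy the Lipschitz and linear-growth bounds:
\[
df(t,X_t)=\Big(\partial_t f+b\cdot\nabla f+\frac{1}{2}\sum_{k,j}a^{kj}\partial_{kj}f\Big)(t,X_t)\,dt+\big(\nabla f\cdot\sigma\big)(t,X_t)\,dW^{(b,\sg)}_t.
\]
Integrating over $[t,t+h]$, dividing by $h$ and taking $E[\,\cdot\mid\cal P_t]$, the stochastic integral is a $\cal P$-martingale and contributes nothing, while the boundedness hypotheses supply the domination needed to pass to the limit $h\to0^+$ inside the conditional expectation. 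Using $DX_t=b(t,X_t)$ from Proposition \ref{derives_Nelson}, this yields $Df(t,X_t)=\big(\partial_t f+DX_t\cdot\nabla f+\frac{1}{2}\sum_{k,j}a^{kj}\partial_{kj}f\big)(t,X_t)$.

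\textbf{Backward part.} Here the essential input is the time-reversal theory of \cite{mns} recalled above: the reversed process $\b X$ is again a Brownian diffusion, $D_*X_t=b_*(t,X_t)$ exists by Proposition \ref{derives_Nelson}, and $D\b X=\b{b_*}$. I would Taylor-expand $f(t-h,X_{t-h})$ about the point $(t,X_t)$ and evaluate $\frac{1}{h}E\big[f(t,X_t)-f(t-h,X_{t-h})\mid\cal F_t\big]$. The time increment reproduces $\partial_t f$; the first-order increment produces $D_*X_t\cdot\nabla f$; and the quadratic increment satisfies $\frac{1}{h}E\big[(X_t-X_{t-h})^k(X_t-X_{t-h})^j\mid\cal F_t\big]\to a^{kj}(t,X_t)$, but its contribution enters with a global minus sign because the expansion is centred at the \emph{later} time. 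This gives $D_*f(t,X_t)=\big(\partial_t f+D_*X_t\cdot\nabla f-\frac{1}{2}\sum_{k,j}a^{kj}\partial_{kj}f\big)(t,X_t)$. Equivalently, one may apply the already-established forward formula to $\b X$ and push it back through the involutions $\phi,r$, the minus sign in $(\phi u)(t,x)=-u(T-t,x)$ being exactly what flips the sign of the second-order term.

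I expect the backward step to be the main obstacle, precisely the rigorous justification that the one-sided quadratic increment converges, $\frac{1}{h}E[(X_t-X_{t-h})^{\otimes2}\mid\cal F_t]\to a(t,X_t)$, together with the vanishing of the Taylor remainders under conditioning on the future $\cal F_t$. This is where conditions (ii)--(iv) of Definition \ref{bonnesdiffusions} and the reversal theorem of \cite{mns} do the real work, ensuring that $\b X$ has a drift of class $\rL$ and a well-defined density so that the backward conditional expectations behave like the forward ones. Once both one-sided formulas are in hand the conclusion is immediate: averaging gives $\frac{D+D_*}{2}f=\partial_t f+\frac{DX_t+D_*X_t}{2}\cdot\nabla f$ with the second-order terms cancelling, the difference gives $\frac{D-D_*}{2}f=\frac{DX_t-D_*X_t}{2}\cdot\nabla f+\frac{1}{2}\sum_{k,j}a^{kj}\partial_{kj}f$, and forming $\frac{D+D_*}{2}+\mu\frac{D-D_*}{2}$ recombines the drifts into $\mathcal{D}_\mu X_t\cdot\nabla f$ while leaving $\frac{\mu}{2}\sum_{k,j}a^{kj}\partial_{kj}f$, which is exactly (\ref{deriv_fonc}).
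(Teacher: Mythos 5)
Your proof is correct, and it follows essentially the same route as the original argument in \cite{cd} (the paper itself recalls Theorem \ref{derivfonc} without proof): forward derivative via the It\^o formula and the vanishing of the conditional expectation of the martingale part, backward derivative via the time-reversal theory of \cite{mns} with the sign flip on the second-order term, and then the linear recombination through $\mathcal{D}_\mu=\frac{D+D_*}{2}+\mu\frac{D-D_*}{2}$. You also correctly identify that conditions (ii)--(iv) of Definition \ref{bonnesdiffusions} are exactly what makes the backward (time-reversed) half rigorous, which is where the real technical content lies.
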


Moreover, we can generalize for the operator $\cal D_{\mu}$ the "product
rule" given by Nelson in \cite{ne1} p.$80$, which is a fundamental tool to develop a stochastic calculus of variation:

\begin{lem}
\label{loi_produit} Let $X,Y\in\Lambda_{\C}^1$. Then $
E[\mathcal{D}_\mu X_t\cdot Y_t+X_t\cdot \mathcal{D}_{-\mu}Y_t]=
\frac{d}{dt}E[X_t\cdot Y_t]$.
\end{lem}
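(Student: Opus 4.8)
The plan is to reduce the statement to the classical Nelson product rule for the forward/backward pair $(D,D_*)$ and then recover the general $\mu$ case by a single affine-combination identity. Since $\mathcal{D}$ is extended by $\mathbb{C}$-linearity and the scalar product is bilinear, I would first observe that it suffices to argue at the level of the building blocks $D$ and $D_*$ and let the (possibly complex) coefficients take care of themselves; no separate treatment of $\mu=i$ is then needed.

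First I would record the two elementary expansions $\mathcal{D}_\mu = \frac{1+\mu}{2}D + \frac{1-\mu}{2}D_*$ and $\mathcal{D}_{-\mu} = \frac{1-\mu}{2}D + \frac{1+\mu}{2}D_*$, which follow at once from the definition $\mathcal{D}_\mu = \frac{D+D_*}{2} + \mu\frac{D-D_*}{2}$. The analytic heart is Nelson's product rule itself, which I would take in the form (A) $\frac{d}{dt}E[X_t\cdot Y_t] = E[DX_t\cdot Y_t + X_t\cdot D_*Y_t]$. To obtain it I would start from $\frac{d}{dt}E[X_t\cdot Y_t] = \lim_{h\to 0^+}\frac1h E[X_{t+h}\cdot Y_{t+h} - X_t\cdot Y_t]$ and use the telescoping decomposition $X_{t+h}\cdot Y_{t+h} - X_t\cdot Y_t = (X_{t+h}-X_t)\cdot Y_t + X_{t+h}\cdot(Y_{t+h}-Y_t)$. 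Conditioning the first term on the past filtration $\mathcal{P}_t$ (with respect to which $Y_t$ is measurable) and the second on the future filtration $\mathcal{F}_{t+h}$ (with respect to which $X_{t+h}$ is measurable), the tower property turns the two difference quotients into $DX_t$ and $D_*Y_t$ in the limit.

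This passage to the limit is where the hypotheses defining $\Lambda^1$ (existence of the densities $p_t$ and the integrability conditions (iii)–(iv) of Definition \ref{bonnesdiffusions}) together with the $L^2$-existence of the Nelson derivatives from Proposition \ref{derives_Nelson} are genuinely used, and it is the step I expect to be the main obstacle; it is precisely the content of Nelson's computation in \cite{ne1}. I would then obtain the companion identity (B) $\frac{d}{dt}E[X_t\cdot Y_t] = E[D_*X_t\cdot Y_t + X_t\cdot DY_t]$ for free, simply by applying (A) to the reversed pair $(Y,X)$ and invoking the commutativity $X_t\cdot Y_t = Y_t\cdot X_t$.

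Finally I would assemble the result: forming $\frac{1+\mu}{2}\times(A) + \frac{1-\mu}{2}\times(B)$, the right-hand sides combine, via the expansions above, into exactly $E[\mathcal{D}_\mu X_t\cdot Y_t + X_t\cdot\mathcal{D}_{-\mu}Y_t]$, while the two left-hand sides collapse to the single term $\frac{d}{dt}E[X_t\cdot Y_t]$ because $\frac{1+\mu}{2} + \frac{1-\mu}{2} = 1$. This is the whole point of the generalization: once Nelson's forward/backward rule is in hand, the $\mathcal{D}_\mu$ version is a purely algebraic consequence, valid for every admissible $\mu\in\{0,\pm1,\pm i\}$ and, by bilinearity, for all $X,Y\in\Lambda^1_{\C}$.
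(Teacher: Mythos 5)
Your proof is correct, and its overall architecture---establish the forward/backward Nelson rule $\frac{d}{dt}E[X_t\cdot Y_t]=E[DX_t\cdot Y_t+X_t\cdot D_*Y_t]$, then recombine algebraically via $\mathcal{D}_\mu=\frac{1+\mu}{2}D+\frac{1-\mu}{2}D_*$---is the same one the paper has in mind when it says the lemma is an ``immediate consequence of the form of the operator $\mathcal{D}$'' together with Nelson's product rule. The difference lies in how the core identity is justified. The paper does not re-prove it: it observes that $\Lambda^1$ is contained in the Zheng--Meyer class $S(\mathcal{F},\mathcal{G})$ and cites their Theorem I.2 (\cite{zm}, p.~227), which delivers the forward/backward rule with the analytic subtleties (differentiability of $t\mapsto E[X_t\cdot Y_t]$, interchange of limit and expectation) already handled inside that class. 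You instead sketch Nelson's original computation---telescoping decomposition, conditioning the forward increment on $\mathcal{P}_t$ and the backward one on $\mathcal{F}_{t+h}$---and you correctly flag the passage to the limit as the genuine analytic content, deferring it to \cite{ne1}. So both treatments outsource the same hard step, yours to Nelson and the paper's to Zheng--Meyer; yours has the merit of making explicit where the hypotheses enter (measurability of $X_{t+h}$ with respect to $\mathcal{F}_{t+h}$, the $L^2$-existence of the derivatives from Proposition~\ref{derives_Nelson}) and of writing out the affine recombination that the paper treats as immediate, while the paper's citation is the more economical and safer route to full rigor, since the class $S(\mathcal{F},\mathcal{G})$ is designed precisely so that the limit interchange is legitimate. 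One small point to tighten in your write-up: as stated, your telescoping argument only produces the right-hand derivative of $E[X_t\cdot Y_t]$; to conclude that the two-sided derivative exists, as the lemma asserts, one needs in addition the continuity in $t$ of $E[DX_t\cdot Y_t+X_t\cdot D_*Y_t]$ (or a symmetric argument with left-hand increments), which is again part of what the cited theorems furnish.
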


The proof is a an immediate consequence of the form of the operator $\cal D$ and to the fact that $\Lambda^1$ is a subspace of the class $S(\cal F,\cal G)$ (\cite{zm} p.226) for which W.
Zheng and P-A. Meyer show the product rule of Nelson (\textit{cf}
\cite{zm} Th. I.$2$ p.$227$).

\section{Reminder about stochastic embedding of Lagrangian systems}

We first define the stochastic analogue of the classical functional.

\begin{defi}
Let $L$ be an admissible Lagrangian function. Set $$
\Xi=\left\{X\in\Lambda^1,E\left[\int_0^T
|L(X_t,\cal{D}_{\mu}X_t)|dt\right]<\infty\right\}.$$ The
functional associated to $L$ is defined by
\begin{equation}
\label{functional} F: \left\{ \begin{array}{ccc} \Xi &
\longrightarrow & \C \\
X & \longmapsto & \di E\left[\int_0^T L(X_t,\mathcal{D}_{\mu}
X_t)dt\right] \end{array}\right.  .
\end{equation}
\end{defi}

In what follows, we need a special notion which we will call
$L$-adaptation:

\begin{defi}
Let $L$ be an admissible Lagrangian function. A process $X\in \Lambda^1$ is said to be $L$-adapted if:
\begin{enumerate}
\item[\rm (i)] $X\in \Xi$;
\item[\rm (ii)] For all $t\in I$, $\di
\partial_x L (X_t,{\cal D}_{\mu} X_t) \in L^2(\Omega)$;
\item[\rm (iii)] $\di \partial_v L (X_t,{\cal D}_{\mu} X_t) \in\Lambda^1$.
\end{enumerate}
\end{defi}

The set of all $L-$adapted processes will be denoted by $\cal L$.\\

We introduce the following terminology:

\begin{defi}
Let $\Gamma$ be a subspace of $\Lambda^1$ and $X\in \Lambda^1$. A
$\Gamma$-variation of $X$ is a stochastic process of the form
$X+Z$, where $Z\in \Gamma$. Moreover set $$
\Gamma_{\Xi}=\left\{Z\in\Gamma, \forall X\in\Xi,
Z+X\in\Xi\right\}.$$
\end{defi}

We now define a notion of {\it differentiable functional}. Let
$\Gamma$ be a subspace of $\Lambda^1$.

\begin{defi}
Let $L$ be an admissible Lagrangian function and $F$ the
associated functional. The functional $F$ is called
$\Gamma$-differentiable at $X\in \cal{L}$ if for all
$Z\in\Gamma_{\Xi}$
\begin{equation}
F(X+Z)-F(X)=dF(X,Z)+R(X,Z) ,
\end{equation}
where $dF(X,Z)$ is a linear functional of $Z\in \Gamma_{\Xi}$
and $R(X,Z)=o(\parallel Z\parallel )$. 

A $\Gamma$-critical process for the functional $F$ is a
stochastic process $X\in \Xi\cap\cal{L}$ such that $dF(X,Z)=0$
for all $Z\in \Gamma_{\Xi}$ such that $Z(a)=Z(b)=0$.
\end{defi}

The main result of \cite{cd} is the following analogue of the least-action
principle for Lagrangian mechanics.

\begin{thm}[Global Least action principle]
\label{glap}
Let $L$ be an admissible lagrangian with all second derivatives
bounded. A necessary and sufficient condition for a process
$X\in \cal L \cap\Lambda^3$ to be a $\Lambda^1$-critical process of the associated functional $F$ is that
it satisfies
\begin{eqnarray}
\label{equaSLAP}
{\partial L\over \partial x}(X_t,\mathcal{D}_{\mu} X_t)-\mathcal{D}_{-\mu}
\left [ \di {\partial L\over \partial v} (X_t,\mathcal{D}_{\mu} X_t) \right ]
=0 .
\end{eqnarray}
We call this equation the Global Stochastic Euler-Lagrange equation (GSEL).
\end{thm}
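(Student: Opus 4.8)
The plan is to follow the standard calculus-of-variations argument, but carried out inside the stochastic embedding, using the product rule of Lemma~\ref{loi_produit} in place of classical integration by parts. First I would fix a process $X\in\cal L\cap\Lambda^3$ and an arbitrary variation direction $Z\in\Lambda^1_{\Xi}$ with $Z(a)=Z(b)=0$, and expand the functional $F(X+Z)$. Writing $L=L(x,v)$, a first-order Taylor expansion of $L$ along $(X+Z,\cal D_\mu(X+Z))$ gives
\begin{equation}
F(X+Z)-F(X)=E\left[\int_0^T\left(\frac{\partial L}{\partial x}(X_t,\cal D_\mu X_t)\cdot Z_t+\frac{\partial L}{\partial v}(X_t,\cal D_\mu X_t)\cdot\cal D_\mu Z_t\right)dt\right]+R(X,Z),
\end{equation}
where linearity of $\cal D_\mu$ on $\Lambda^1_v$ has been used to write $\cal D_\mu(X+Z)=\cal D_\mu X+\cal D_\mu Z$, and the remainder $R$ collects the second-order terms. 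Identifying $dF(X,Z)$ as the bracketed linear expression is the definition-checking part of the argument.

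The crucial step is to remove the derivative $\cal D_\mu Z$ from the second term and transfer it onto $\partial L/\partial v$. Here I would apply Lemma~\ref{loi_produit} with the pair $\big(\partial_v L(X_t,\cal D_\mu X_t),\,Z_t\big)$: since the lemma states $E[\cal D_\mu U\cdot V+U\cdot\cal D_{-\mu}V]=\frac{d}{dt}E[U\cdot V]$, applying it with $U=\partial_v L$ and the role of $V$ played by $Z$ — more precisely using the $-\mu$ version so that the derivative lands on $Z$ as $\cal D_\mu Z$ — yields
\begin{equation}
E\left[\partial_v L\cdot\cal D_\mu Z_t\right]=\frac{d}{dt}E\left[\partial_v L\cdot Z_t\right]-E\left[\cal D_{-\mu}(\partial_v L)\cdot Z_t\right].
\end{equation}
Integrating in $t$ over $[0,T]$, the total-derivative term produces the boundary contribution $E[\partial_v L\cdot Z_t]\big|_0^T$, which vanishes because $Z(a)=Z(b)=0$. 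The $L$-adaptation hypotheses on $X$ (ensuring $\partial_v L\in\Lambda^1$ and $\partial_x L\in L^2$) together with $X\in\Lambda^3$ are exactly what guarantee that all the processes in sight lie in the spaces where Lemma~\ref{loi_produit} and Fubini apply, and that $\cal D_{-\mu}(\partial_v L)$ is well defined.

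Combining the two displays gives
\begin{equation}
dF(X,Z)=E\left[\int_0^T\left(\frac{\partial L}{\partial x}(X_t,\cal D_\mu X_t)-\cal D_{-\mu}\frac{\partial L}{\partial v}(X_t,\cal D_\mu X_t)\right)\cdot Z_t\,dt\right].
\end{equation}
The process $X$ is $\Lambda^1$-critical precisely when this vanishes for every admissible $Z$, and a fundamental-lemma-of-the-calculus-of-variations argument then forces the bracketed factor to be zero, which is exactly the GSEL equation~(\ref{equaSLAP}). I expect the main obstacle to be the analytic bookkeeping rather than the algebra: one must verify that the boundedness of the second derivatives of $L$ controls the remainder $R(X,Z)=o(\|Z\|)$ uniformly, and that the bilinear product rule legitimately applies to $\partial_v L(X_t,\cal D_\mu X_t)$, which requires knowing this composed process genuinely sits in $\Lambda^1$ (hence the $L$-adaptation condition (iii)) and that its Nelson derivative $\cal D_{-\mu}$ exists (hence $X\in\Lambda^3$). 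Establishing the stochastic analogue of the du~Bois-Reymond / fundamental lemma — that $E\int_0^T H_t\cdot Z_t\,dt=0$ for all such $Z$ implies $H=0$ — in the $L^1(\Omega\times[0,T])$ setting is the one place where the stochastic framework demands more care than the classical deterministic proof.
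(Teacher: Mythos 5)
First, a caveat about the comparison you asked for: the paper itself does \emph{not} prove Theorem \ref{glap}. The theorem is imported verbatim from \cite{cd} (``The main result of \cite{cd} is the following analogue of the least-action principle\dots''), so there is no in-paper proof to measure your attempt against. Judged on its own terms, your outline follows the standard route, which is also the route of \cite{cd}: first-order Taylor expansion of the functional, identification of $dF(X,Z)$, the Nelson product rule of Lemma \ref{loi_produit} applied to the pair $\bigl(Z_t,\partial_v L(X_t,\mathcal{D}_\mu X_t)\bigr)$ in place of classical integration by parts --- this is indeed exactly where the operator $\mathcal{D}_{-\mu}$, rather than $\mathcal{D}_\mu$, enters (\ref{equaSLAP}), the source of the non-coherence discussed after the theorem --- and cancellation of the boundary term using $Z_0=Z_T=0$. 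One small slip: you do not need any ``$-\mu$ version'' of the lemma; Lemma \ref{loi_produit} as stated, with first argument $Z$ and second argument $\partial_v L$, gives precisely your display.

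The genuine gap is the necessity direction, which is the substantive half of an ``if and only if'' statement and which you only name. In this framework the du Bois-Reymond step is not routine bookkeeping: the vanishing of $dF(X,Z)=E\int_0^T H_t\cdot Z_t\,dt$ is known only for $Z$ ranging over $\Lambda^1_{\Xi}$ with $Z_0=Z_T=0$, that is, over diffusions subject to conditions (i)--(iv) of Definition \ref{bonnesdiffusions} (SDE structure with coefficients in $\rL$, existence of densities, integrability of $\partial_j(a^{ij}p_t)$, \dots). You cannot localize by choosing $Z_t=\phi(t)\,\mathrm{sgn}(H_t)$ or $Z_t=\phi(t)H_t$ as in the classical lemma without verifying that such processes belong to this class, and in general they do not. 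Any repair must exploit the structure of $H$: by Proposition \ref{derives_Nelson} and Theorem \ref{derivfonc}, $\mathcal{D}_\mu X_t$, hence $\partial_v L(X_t,\mathcal{D}_\mu X_t)$, hence $H_t$ itself, is of the form $g(t,X_t)$ for a measurable function $g$, and one must then exhibit a family of admissible variations rich enough to force $g(t,X_t)=0$ $dt\otimes d\bP$-a.e.; constructing that family is the real content of the theorem and is absent from your proposal. A secondary gap: with the norm $\|Z\|=E\int_0^T|Z_t|\,dt$ of $\Lambda^1_v$, bounded second derivatives of $L$ only give a remainder bounded by $C\,E\int_0^T\bigl(|Z_t|^2+|\mathcal{D}_\mu Z_t|^2\bigr)\,dt$, which is not in general $o\bigl(E\int_0^T|Z_t|\,dt\bigr)$; the claim $R(X,Z)=o(\|Z\|)$ requires a norm on variations that also controls $\mathcal{D}_\mu Z$, and your write-up should state explicitly which norm it is using.
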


The main drawback of Theorem \ref{glap}  is that equation (\ref{equaSLAP}) is not {\it coherent} (see \cite{cd},$\S$.6.2) {\it i.e.} that it does not coincide with the direct stochastic embedding of the classical Euler-Lagrange equation of the form
\begin{eqnarray}
{\partial L\over \partial x}(X_t,\mathcal{D}_{\mu} X_t)-\mathcal{D}_{\mu}
\left [ \di {\partial L\over \partial v} (X_t,\mathcal{D}_{\mu} X_t) \right ]
=0 .
\end{eqnarray}
except when $\mu=0$.\\

In order to obtain a coherent embedding without imposing $\mu=0$, we must restrict the set of variations. Let us introduce the space of Nelson differentiable processes:
\begin{equation}
\Nd=\{X\in \Lambda^1, DX=D_*X\}.
\end{equation}
Using $\Nd$-variations we have been able to prove the following result \cite{cd}:

\begin{pro}\label{trivial_Nd}
Let $L$ be an admissible lagrangian with all second derivatives
bounded.
A solution of the equation
\begin{eqnarray}
{\partial L\over \partial x}(X_t,\mathcal{D}_{\mu} X_t)-\mathcal{D}_{\mu}
\left [ \di {\partial L\over \partial v} (X_t,\mathcal{D}_{\mu} X_t) \right ]
=0,
\end{eqnarray}
called the Stochastic Euler-Lagrange Equation (SEL), is a $\Nd$-critical process for the functional
$F$ associated to $L$.
\end{pro}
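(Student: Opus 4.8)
The plan is to establish that $F$ is $\Nd$-differentiable at $X$, to compute the differential $dF(X,Z)$ explicitly, and to read off that it vanishes on all admissible $\Nd$-variations precisely when $X$ solves (SEL). The computation is the stochastic analogue of the classical ``integrate the Euler--Lagrange term by parts'' argument; the one genuinely new ingredient is that the restriction to $\Nd$ is exactly what makes the integration by parts leave $\mathcal{D}_\mu$ (rather than $\mathcal{D}_{-\mu}$) acting on $\partial_v L$.

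Fix $X\in\Xi\cap\cal L$ satisfying (SEL) and let $Z\in(\Nd)_\Xi$ with $Z_0=Z_T=0$. Since $\mathcal{D}_\mu$ is linear we have $\mathcal{D}_\mu(X+Z)=\mathcal{D}_\mu X+\mathcal{D}_\mu Z$, and a first-order Taylor expansion of $L$ along $(X_t,\mathcal{D}_\mu X_t)$ yields
\[
L(X_t+Z_t,\mathcal{D}_\mu X_t+\mathcal{D}_\mu Z_t)=L(X_t,\mathcal{D}_\mu X_t)+\partial_x L\cdot Z_t+\partial_v L\cdot\mathcal{D}_\mu Z_t+\rho_t,
\]
where the partial derivatives are evaluated at $(X_t,\mathcal{D}_\mu X_t)$. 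Because all second derivatives of $L$ are bounded, the remainder satisfies $E\int_0^T|\rho_t|\,dt=o(\|Z\|)$ in $L^1(\Omega\times[0,T])$; taking expectations and integrating then identifies the linear part
\[
dF(X,Z)=E\left[\int_0^T\left(\partial_x L\cdot Z_t+\partial_v L\cdot\mathcal{D}_\mu Z_t\right)dt\right].
\]

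Now I would integrate the second term by parts via Lemma \ref{loi_produit}, and this is where the constraint $Z\in\Nd$ is decisive. Since $DZ=D_*Z$, one has $\mathcal{D}_\mu Z_t=\mathcal{D}_{-\mu}Z_t$, so the term equals $\partial_v L\cdot\mathcal{D}_{-\mu}Z_t$, to which the product rule applies with first factor $\partial_v L(X_t,\mathcal{D}_\mu X_t)\in\Lambda^1$ (by $L$-adaptation (iii)) and second factor $Z_t\in\Lambda^1$. This gives
\[
E\left[\partial_v L\cdot\mathcal{D}_\mu Z_t\right]=\frac{d}{dt}E\left[\partial_v L\cdot Z_t\right]-E\left[\mathcal{D}_\mu(\partial_v L)\cdot Z_t\right].
\]
Integrating over $[0,T]$, the exact-derivative term contributes the boundary value $E[\partial_v L\cdot Z_t]$ evaluated at $T$ minus at $0$, which vanishes since $Z_0=Z_T=0$. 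Substituting back,
\[
dF(X,Z)=E\left[\int_0^T\left(\partial_x L(X_t,\mathcal{D}_\mu X_t)-\mathcal{D}_\mu\left[\partial_v L(X_t,\mathcal{D}_\mu X_t)\right]\right)\cdot Z_t\,dt\right],
\]
and since $X$ solves (SEL) the integrand vanishes identically, so $dF(X,Z)=0$ for all such $Z$. This is exactly the $\Nd$-criticality of $X$.

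The main obstacle is the sign-bookkeeping in the integration by parts. Lemma \ref{loi_produit} unavoidably transfers a $\mathcal{D}_{-\mu}$ onto one of the two factors, so in the unrestricted $\Lambda^1$ setting one is forced into the $\mathcal{D}_{-\mu}$ of the global equation (\ref{equaSLAP}); the sole role of the hypothesis $Z\in\Nd$ is the identity $\mathcal{D}_\mu Z=\mathcal{D}_{-\mu}Z$, which lets the product rule be invoked while keeping $\mathcal{D}_\mu$ on $\partial_v L$, thus restoring coherence with the direct embedding of the Euler--Lagrange equation. The remaining points---integrability of the integrand, applicability of the product rule, and the $o(\|Z\|)$ control of $\rho_t$---are routine consequences of $X\in\cal L$ and the boundedness of the second derivatives of $L$, which I would check but not belabor.
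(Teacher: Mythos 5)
Your proposal is correct and follows essentially the intended argument: the paper itself states Proposition \ref{trivial_Nd} without proof (deferring to \cite{cd}), and the argument there is exactly yours --- first-order Taylor expansion of $L$, integration by parts via the Nelson product rule (Lemma \ref{loi_produit}), vanishing boundary terms from $Z_0=Z_T=0$, with the sole role of the restriction $Z\in\Nd$ being the identity $\mathcal{D}_{-\mu}Z=\mathcal{D}_{\mu}Z$, which keeps $\mathcal{D}_{\mu}$ (rather than $\mathcal{D}_{-\mu}$) acting on $\partial_v L$. You also correctly prove only the stated direction (a solution of SEL is $\Nd$-critical), consistent with the paper's remark that the converse is not known for $\Nd$-variations.
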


We have not been able to prove the converse of this lemma for
$\Nd$-variations.

\section{Stochastic embedding and the three equations of fluid mechanics}

We consider the Lagrangian function 
\begin{equation}
\label{lagrangien_nat}
L(t,x,v)=\di \frac{v^2}{2} -p ,
\end{equation}
where $p$ is a function depending on $x$ and $t$. The classical Euler-Lagrange equation yields
\begin{equation}
\di {d^2  x \over d t^2}  =-\nabla p.
\end{equation}

In the sequel, we investigate  the stochastic embedding of this lagrangian over diffusion processes of specific forms. Precisely, the Navier-Stokes equation, the Euler equation and the Stokes equation turn out to be respectively related to the following subspaces of $\Lambda^1$:
\begin{enumerate}
\item The class $\Lambda^2_c$ composed of all the diffusions $X^{(u,\sg)}$ such that $\sg$ is constant, the drift $u$ is $C^2$, bounded with all its second derivatives bounded, and $\nabla \log \rho_t$ has bounded second order derivatives. We can prove that $\Lambda^2_c\subset \Lambda^2$ thanks to Prop. 2 p.394 in \cite{dn}. Recall that a $(u,\sigma)$-diffusion with a constant diffusion coefficient $\sg$ is of the form:
\begin{equation}
X_t = X_0+\int_0^t u(s,X_s) ds + \sigma W_t.
\end{equation}
\item The class $\Lambda^2_0$ composed of all the smooth processes of the form:
\begin{equation}
X_t = X_0+\int_0^t u(s,X_s) ds,
\end{equation}
where $u\in C^2(\R^+,\R^d)$ and $X_0\in L^2(\Omega)$.
\item The class $\Lambda^2_{B}$ composed of all the processes of the form:
\begin{equation}
X_t = X_0+\int_0^Tu(s,\sigma B_s) \, ds,
\end{equation}
where $u\in C^2(\R^+,\R^d)$, $\sigma\in \R$, $X_0\in L^2(\Omega)$ and $B$ is a Brownian bridge pinned to be $0$ at $T$.
\end{enumerate}

\section{The incompressible Navier-Stokes equation}

Recall that the incompressible Navier-Stokes equation can be written as
\begin{equation}
\partial_t u + (u\cdot \nabla) u =\nu \Delta u -\nabla p .
\end{equation}

Regarding stochastic least action principles, we can consider two cases: The first one uses a full set of variation and the second one uses $\Nd$-variations.

\subsection{Full variations}

Applying Th. \ref{glap}, a $(u,\sg)$-diffusion $X$ which is a critical point of the natural lagrangian (\ref{lagrangien_nat}) for full variations, satisfies
\begin{equation}
\label{stocbase}
{\cal D}_{-\mu} \left ( {\cal D}_{\mu} X_t \right ) =-\nabla p.
\end{equation}

When $\mu=1$, i.e. ${\cal D} =D$, we obtain $D_* (DX_t )=D_* u(t,X_t) = -\nabla p$ and finally:
\begin{equation}
\partial_t u + u_* \cdot \nabla u -\di {\sigma^2 \over 2} \Delta u  = - \nabla p.
\end{equation}

When $\mu=-1$, i.e. ${\cal D} =D_*$, we obtain $D (D_* X_t )=D u_* (t,X_t )= -\nabla p$ and finally:
\begin{equation}
\partial_t u_*  + u \cdot \nabla u_*  +\di {\sigma^2 \over 2} \Delta u_*   =  -\nabla p.
\end{equation}

We notice that the field $u$ does not satisfy the Navier-Stokes equations when using full variations in the case $\mu=1$ since the constant in front of $\Delta u$ is positive. When $\mu=-1$, we can obtain the Navier-Stokes equation if and only if $u_*=u$, i.e. if $DX_t =D_* X_t$ which corresponds to Nelson differentiable processes. These processes can be described through a given partial differential equation (see \cite{cd}). In particular, a diffusion belonging to $\Lambda^1$ with a constant diffusion coefficient cannot be a Nelson differentiable process. As a consequence, we have no hope to recover the Navier-Stokes equation into this setting.

\subsection{$\Nd$-variations}
 
One of the main results of \cite{cd} states that the stochastic Euler-Lagrange equation reads in that case:
\begin{equation}
{\cal D}_{\mu} \left ( {\cal D}_{\mu} X_t \right ) =-\nabla p .
\end{equation}

When $\mu=1$, i.e. ${\cal D} =D$, we obtain $D^2X_t =D u(t,X_t )=-\nabla p$ and finally:

\begin{equation}
\partial_t u  + u \cdot \nabla u  +\di {\sigma^2 \over 2} \Delta u   =  -\nabla p.
\end{equation}

We notice that up to the sign of the constant term we obtain the form of the Navier-Stokes equation. As $\sigma^2$ is always positive we have a apparent obstruction to obtain the Navier-Stokes equation using the $D$-embedding. However this obstruction can be removed considering drifts of the form $\b u$.

When $\mu=-1$, i.e. ${\cal D}=D_*$, we can obtain the Navier Stokes equation from a $(u,\sg)$-diffusion and it will be satisfied by $u_*$.
 
These results are the content of the following theorem:

\begin{thm}\label{thm_navier_temp}
Let $\nu>0$ and  $X^{(u,\sqrt{2\nu})} \in \Lambda_c^2$. \\
If the left velocity field $u_* (t,x)$ of $X$ satisfies the Navier-Stokes equation
\begin{equation}\label{navieru*}
\partial_t u_*  + u_* \cdot \nabla u_*  -\nu \Delta u_*   =  -\nabla p ,
\end{equation}
then the stochastic process $X^{(u,\sqrt{2\nu})} $ is a $\Nd$-critical process of the stochastic functional 
\begin{equation}
X\mapsto \mbox{\rm E} \left [ \int_0^T L(X_t ,D_* X_t ) dt \right],
\end{equation}
where $L$ is the natural lagrangian $L(x,v)= \frac{v^2}{2} -p$.

Moreover, if $u$ verifies the Navier-Stokes equation then the stochastic process $X^{(\b u,\sqrt{2\nu})} $ is a $\Nd$-critical process of the stochastic functional 
\begin{equation}
X\mapsto \mbox{\rm E} \left [ \int_0^T L(X_t ,D X_t ) dt \right],
\end{equation}
where $L$ is the natural lagrangian $L(x,v)= \frac{v^2}{2} + \b p$.
\end{thm}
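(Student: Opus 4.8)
The plan is to reduce both assertions to the Stochastic Euler--Lagrange equation (SEL) of Proposition \ref{trivial_Nd}, which for the natural Lagrangian degenerates to a very simple identity. For $L(x,v)=\frac{v^2}{2}-p$ we have $\partial_x L=-\nabla p$ and $\partial_v L=v$; since $\partial_v L$ evaluated at $(X_t,\mathcal{D}_\mu X_t)$ is simply $\mathcal{D}_\mu X_t$, the SEL equation $\partial_x L-\mathcal{D}_\mu[\partial_v L]=0$ collapses to
\begin{equation}
\mathcal{D}_\mu(\mathcal{D}_\mu X_t)=-\nabla p,
\end{equation}
while $L(x,v)=\frac{v^2}{2}+\b p$ collapses instead to $\mathcal{D}_\mu(\mathcal{D}_\mu X_t)=\nabla\b p$. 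By Proposition \ref{trivial_Nd} it then suffices to verify the appropriate identity for the relevant value of $\mu$, after which the $\Nd$-critical property is automatic. I would also record at the outset that, since $X^{(u,\sqrt{2\nu})}\in\Lambda_c^2$, both $u$ and $\nabla\log\rho_t$ are $C^2$ with bounded second derivatives, so the left velocity field $u_*=u-2\nu\,\nabla\log\rho_t$ shares this regularity; this is exactly what licenses the componentwise application of Theorem \ref{derivfonc}.

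For the first statement I take $\mu=-1$, so $\mathcal{D}_{-1}=D_*$ and $D_*X_t=u_*(t,X_t)$. Applying Theorem \ref{derivfonc} with $\mu=-1$ to each component of $u_*$, and using that the diffusion coefficient is the constant $\sigma=\sqrt{2\nu}$, whence $a=\sigma\sigma^*=2\nu\,\mathrm{Id}$ and $\frac{1}{2}\sum_{k,j}a^{kj}\partial_{kj}=\nu\Delta$, I obtain
\begin{equation}
D_*(D_*X_t)=\left(\partial_t u_*+(u_*\cdot\nabla)u_*-\nu\Delta u_*\right)(t,X_t).
\end{equation}
The right-hand side is exactly the left member of the Navier--Stokes equation (\ref{navieru*}) satisfied by $u_*$, hence equals $-\nabla p$ evaluated along $X_t$. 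This is the identity $\mathcal{D}_{-1}(\mathcal{D}_{-1}X_t)=-\nabla p$, and Proposition \ref{trivial_Nd} finishes this case.

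For the second statement I take $\mu=1$ and the process $X^{(\b u,\sqrt{2\nu})}$, whose forward drift is $\b u$, so that $DX_t=\b u(t,X_t)$. The same chain rule, now with $\mu=+1$, yields
\begin{equation}
D(DX_t)=\left(\partial_t\b u+(\b u\cdot\nabla)\b u+\nu\Delta\b u\right)(t,X_t),
\end{equation}
where the Laplacian term now appears with a plus sign. The crux is to unwind the involution $\b u(t,x)=-u(T-t,x)$: the time derivative gives $\partial_t\b u(t,x)=(\partial_t u)(T-t,x)$, the quadratic term is invariant because the two sign changes cancel, so $(\b u\cdot\nabla)\b u(t,x)=((u\cdot\nabla)u)(T-t,x)$, whereas the diffusion term flips sign, $\nu\Delta\b u(t,x)=-\nu(\Delta u)(T-t,x)$. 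Collecting terms, the bracket equals $\left(\partial_t u+(u\cdot\nabla)u-\nu\Delta u\right)(T-t,x)$, which by the Navier--Stokes equation for $u$ equals $-(\nabla p)(T-t,x)=\nabla\b p(t,x)$. Thus $\mathcal{D}_1(\mathcal{D}_1 X_t)=\nabla\b p$, and Proposition \ref{trivial_Nd} applies once more.

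The delicate part is the bookkeeping rather than the algebra. On the analytic side one must check that the regularity built into $\Lambda_c^2$ genuinely delivers the hypotheses of Theorem \ref{derivfonc} (that $\partial_t u_*$, $\nabla u_*$ and $\partial_{ij}u_*$ are bounded) and of Proposition \ref{trivial_Nd} (that the natural Lagrangian has bounded second derivatives, which here reduces to a boundedness requirement on the Hessian of $p$). On the combinatorial side every sign generated by the involution $\phi$ must be tracked: it is precisely the interplay between the sign flip in $\b u$ and the positivity of $\sigma^2=2\nu$ that converts the unwanted $+\nu\Delta$ of the $D$-embedding into the physical $-\nu\Delta$ of Navier--Stokes. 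I expect this sign accounting in the second part to be the step demanding the most care.
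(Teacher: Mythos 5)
Your proposal is correct and follows essentially the same route as the paper: apply the chain rule of Theorem \ref{derivfonc} with $\mu=-1$ (resp. $\mu=+1$) to get $D_*D_*X_t=(\partial_t u_*+(u_*\cdot\nabla)u_*-\nu\Delta u_*)(t,X_t)$ (resp. the $+\nu\Delta$ version for $\b u$), unwind the involution to convert $+\nu\Delta\b u$ into $-\nu\Delta u$ at reversed time, and conclude via Proposition \ref{trivial_Nd}. The paper's proof is exactly this computation (with the appeal to Proposition \ref{trivial_Nd} left implicit), so your additional bookkeeping of regularity and of the reduction of the SEL to $\mathcal{D}_\mu\mathcal{D}_\mu X_t=-\nabla p$ only makes explicit what the paper takes for granted.
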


\begin{proof}
Let $X^{(u,\sigma)} \in \Lambda_c^2$ and let $u_*$ be its left velocity field $D_*X$.
Then we have:
\begin{equation}
D_* (D_* X^{(u,\sigma)}_t )=\left(\partial_t u_*  + u_* \cdot\nabla u_*  -\di {\sigma^2 \over 2} \Delta u_*\right)(t,X^{(u,\sigma)}).
\end{equation}
Therefore if $u_* (t,x)$ satisfies the Navier-Stokes equation (\ref{navieru*}), then:
$$D_* (D_* X^{(u,\sqrt{2\nu})}_t )=-\nabla p(t,X^{(u,\sigma)}),$$
so $X^{(u,\sqrt{2\nu})} $ is a $\Nd$ critical process of the stochastic functional 
\begin{equation}
X\mapsto \mbox{\rm E} \left [ \int_0^T L(X_t ,D_* X_t ) dt \right]. 
\end{equation}

Now, assume that $u$ satisfies the Navier-Stokes equation and let us consider the diffusion $X:=X^{(\b u,\sg)}$. Then 
\begin{eqnarray}
D^2X_t & = & (\partial_t\b u+\b u\cdot \nabla \b u +\nu \Delta \b u) (t,X_t)=(\partial_t u+u\cdot \nabla u -\nu \Delta  u) (T-t,X_t)\\
 & = & -\nabla p(T-t,X_t)=\nabla \b p(t,X_t),
\end{eqnarray}
which yields the last statement of the Theorem. 

\end{proof}

\begin{rema}\rm
Due to the fact that Theorem \ref{glap} does not give an equivalence between $\Nd$-critical process and the stochastic Euler-Lagrange equation, we are not able to prove that the Navier-Stokes can be obtain by a coherent $D_*$ embedding procedure, i.e. that the following diagramm commutes
\begin{eqnarray}
\xymatrix{
  & L(x(t),\dot{x} (t) ) \ar[d]_{\mbox{\rm LAP}} \ar[r]^{\mbox{\rm Emb} (D_* )} & L(X_t, D_* X_t)
  \ar[d]^{\Nd-\mbox{\rm Stochastic LAP }}       \\
  & \di {d\over dt} {\partial L\over \partial v} (z(t))   = {\partial L\over \partial x} (z(t))\  \ar[r]_{\quad \rm Emb(D_* )\quad }  &  \ D_*  {\partial L\over \partial v} (Z(t))  ={\partial L\over \partial x}  (Z(t))  }
\end{eqnarray}
where $z(t)=(x(t),\dot{x}(t))$, $Z(t)=(X_t,D_* X_t )$, and $LAP$ stands for "Least Action Principle".
\end{rema}

\section{About the incompressible Euler equation}

The incompressible Euler equation 
\begin{equation}
\partial_t u + (u\cdot \nabla) u =-\nabla p 
\end{equation}
is of course obtained as a limit of the Navier-Stokes equation when the viscosity $\nu$ goes to zero. However, in our case this has a strong consequence on the underlying embedding from the functional point of view. Indeed, when $\nu$ goes to zero, the set ${\cal S}_0$ reduces to deterministic stochastic processes which are sufficiently regular. On the contrary the basic property of diffusion processes is that each trajectory is a.s. H\"olderian with exponent $1/2 -\epsilon$ for $\epsilon >0$. 

First of all, let us remark that $D=D_*=\frac{d}{dt}$ on $\cal S_0$.

\begin{cor}\label{euler_rep}
A process $X_t \in {\cal S}_0$ of the form 
\begin{equation}
dX_t =u(t,X_t )dt
\end{equation}
is a $\Nd$ or $\Lambda^1$-critical process of the stochastic functional 
\begin{equation}
X\mapsto \mbox{\rm E} \left [ \int_0^T L(X_t ,D X_t ) dt \right ] 
\end{equation}
where $L(x,v)=\di \frac{v^2}{2} -p$, if and only if the velocity field $u(t,x)$ satisfies the Euler equation
\begin{equation}
\partial_t u  + u\cdot\nabla u = -\nabla p.
\end{equation}
\end{cor}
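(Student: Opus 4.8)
The plan is to exploit the remark stated just before the corollary, namely that $D=D_*=\frac{d}{dt}$ on ${\cal S}_0$. Its immediate consequence is that $\mathcal{D}_\mu=\mathcal{D}_{-\mu}=\frac{d}{dt}$ on ${\cal S}_0$ for every admissible $\mu$, so that the Stochastic Euler--Lagrange equation (SEL) of Proposition \ref{trivial_Nd} and the Global Stochastic Euler--Lagrange equation (GSEL) of Theorem \ref{glap} collapse to one and the same relation. For the natural Lagrangian $L(x,v)=\frac{v^2}{2}-p$ one has $\partial_x L=-\nabla p$ and $\partial_v L=v$, so both equations read $-\nabla p(t,X_t)-D^2X_t=0$. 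The first step is therefore to compute $D^2X_t$ along a process $dX_t=u(t,X_t)\,dt$: since $\sigma=0$ the diffusion term in Theorem \ref{derivfonc} vanishes (equivalently, one applies the ordinary chain rule to the deterministic curve), giving $DX_t=u(t,X_t)$ and then $D^2X_t=D[u(t,X_t)]=(\partial_t u+(u\cdot\nabla)u)(t,X_t)$.

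With this identity the equivalence becomes transparent in all but one direction. For the ``if'' direction I would assume $u$ solves $\partial_t u+(u\cdot\nabla)u=-\nabla p$; then $D^2X_t=-\nabla p(t,X_t)$, so both SEL and GSEL hold along $X$. Proposition \ref{trivial_Nd} then yields that $X$ is $\Nd$-critical, and Theorem \ref{glap}, being an equivalence, yields that $X$ is $\Lambda^1$-critical. For the ``only if'' part in the $\Lambda^1$ case I invoke Theorem \ref{glap} in the reverse sense: a $\Lambda^1$-critical process must satisfy the GSEL, which on ${\cal S}_0$ is exactly $D^2X_t=-\nabla p(t,X_t)$, i.e. $\partial_t u+(u\cdot\nabla)u=-\nabla p$.

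The delicate point is the converse for $\Nd$-critical processes, since Proposition \ref{trivial_Nd} supplies only one implication and its converse is, as the authors note, unavailable in general. The way around it is that on ${\cal S}_0$ the problem degenerates to the classical one: every process is a deterministic curve, and every smooth deterministic path vanishing at the endpoints lies in $\Nd$ (because $D=D_*=\frac{d}{dt}$ on such paths). Hence the $\Nd$-critical condition, which demands $dF(X,Z)=0$ for all admissible $Z\in\Nd$ with $Z(0)=Z(T)=0$, forces in particular the vanishing of the first variation against all such deterministic variations; but for deterministic paths the stochastic functional reduces to the classical action $\int_0^T L(x(t),\dot{x}(t))\,dt$, whose first variation is the classical one. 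The fundamental lemma of the calculus of variations then gives the classical Euler--Lagrange equation $\ddot{x}=-\nabla p$, which unfolds along the flow of $u$ into the Euler equation. I expect this reduction-to-the-deterministic-setting argument to be the main obstacle, since it is the one step that genuinely uses the special structure of ${\cal S}_0$ rather than a direct appeal to the two embedding theorems.
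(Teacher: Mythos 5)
Your overall strategy is the one the paper intends: Corollary \ref{euler_rep} is stated there without any proof, as an immediate consequence of the remark that $D=D_*=\frac{d}{dt}$ on ${\cal S}_0$, of the computation $D^2X_t=(\partial_t u+(u\cdot\nabla)u)(t,X_t)$, of Theorem \ref{glap} for $\Lambda^1$-variations and of Proposition \ref{trivial_Nd} for $\Nd$-variations; your first two paragraphs reproduce exactly this. The genuine issue is in your last paragraph, the converse for $\Nd$-criticality, which you correctly identify as the delicate step but do not actually close. Processes in ${\cal S}_0$ are \emph{not} deterministic curves: they carry a random initial condition $X_0\in L^2(\Omega)$, and in fact they must (membership in $\Lambda^1$ requires $X_t$ to admit a density for $t\in(0,T)$, which also means that a smooth deterministic path vanishing at the endpoints is, strictly speaking, not an element of $\Nd$ as defined in the paper). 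Consequently, testing criticality against deterministic variations $Z$ with $Z(0)=Z(T)=0$ gives
\begin{equation}
dF(X,Z)=\int_0^T\left(-E\left[\nabla p(t,X_t)\right]-\frac{d}{dt}E\left[u(t,X_t)\right]\right)\cdot Z_t\,dt ,
\end{equation}
so the fundamental lemma of the calculus of variations yields only
\begin{equation}
E\left[\left(\partial_t u+(u\cdot\nabla)u+\nabla p\right)(t,X_t)\right]=0 ,
\end{equation}
i.e. the Euler equation in expectation, averaged over the law of $X_0$ --- not the pointwise equation your argument claims. To upgrade this one needs variations depending on $X_0$ (say $Z_t=g(X_0)\phi(t)$), and then one must verify that such processes belong to $\Nd_{\Xi}$ under the Lipschitz-drift and density requirements of Definition \ref{bonnesdiffusions}; this verification is the real content of the missing step and you do not supply it.

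A second, related point, affecting both your $\Lambda^1$ and $\Nd$ ``only if'' directions (and admittedly glossed over by the paper itself): criticality gives $(\partial_t u+(u\cdot\nabla)u)(t,X_t)=-\nabla p(t,X_t)$ almost surely, hence the Euler equation only at points charged by the law of $X_t$. To conclude that $u$ satisfies the PDE everywhere one needs the density of $X_t$ to be everywhere positive --- precisely the point the paper makes explicit in the proof of Theorem \ref{stokes_rep} (``the probability density of the bridge is everywhere positive'') and which should be invoked, or assumed, here as well.
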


One must be carrefull with this result. Even if the basic underlying set of stochastic processes corresponds to deterministic processes, the stochastic calculus of variations embed these deterministic processes into arbitrary diffusion processes.

\section{About the incompressible Stokes equation}

Through processes in the class $\Lambda_B$, we can find the following stochastic least action formulation of the Stokes equation:
\begin{thm}\label{stokes_rep}
Let $X \in \Lambda_B$ be a process of the form:
\begin{equation}
dX_t =u(t,\sqrt{2\nu } B_t )dt.
\end{equation}
The velocity field $u$ satisfies the Stokes equation
\begin{equation}
\partial_t u  -\nu \Delta u   = - \nabla p ,
\end{equation}
if and only if the stochastic process $X_t$ is a $\Lambda^1$ critical process of the stochastic functional 
\begin{equation}
X\mapsto \mbox{\rm E} \left [ \int_0^T L(\sqrt{2\nu } B_t ,D X_t ) dt \right ] 
\end{equation}
where $L(x,v)=\di \frac{v^2}{2} -p$.
\end{thm}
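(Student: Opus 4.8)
The plan is to follow the same strategy as in the Navier--Stokes and Euler cases, the genuinely new ingredient being a time-reversal property of the pinned bridge that annihilates the transport term and leaves only $\partial_t u - \nu\Delta u$. First I would compute the forward Nelson derivative of $X$. Since $dX_t = u(t,\sqrt{2\nu}B_t)\,dt$ carries no martingale part, the increment quotient $\frac1h(X_{t+h}-X_t)$ converges in $L^2$ to $u(t,\sqrt{2\nu}B_t)$, so that $DX_t = u(t,\sqrt{2\nu}B_t)$. Writing $Y_t := \sqrt{2\nu}B_t$, the velocity slot of the functional is then $DX_t = u(t,Y_t)$ while its position slot is the fixed reference process $Y_t$.

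The key step is to establish that the \emph{backward} Nelson derivative of the bridge vanishes, $D_*Y = 0$. I would argue this through the time-reversal identity $D\b Y = \b{b_*}$ recalled just after Proposition \ref{derives_Nelson}: because $B$ is pinned to $0$ at $T$, its reversal $\b Y_t = Y_{T-t} = \sqrt{2\nu}B_{T-t}$ is, by the very definition of the bridge, a driftless (scaled) Brownian motion issued from $\b Y_0 = \sqrt{2\nu}B_T = 0$. Hence $D\b Y = 0$, and therefore $D_*Y = 0$. Along the way I must check that $Y$ (equivalently $\b Y$) lies in $\Lambda^1$ and that $u\in C^2$ satisfies the regularity and boundedness hypotheses needed to apply Theorem \ref{derivfonc} and the product rule of Lemma \ref{loi_produit}.

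With $D_*Y = 0$ in hand, I would apply Theorem \ref{derivfonc} to the function $u$ along $Y$ with $\mu = -1$ and $a = \sigma\sigma^* = 2\nu\,\mathrm{Id}$, which gives
\begin{equation}
D_*(DX_t) = D_*\big(u(t,Y_t)\big) = \big(\partial_t u + D_*Y_t\cdot\nabla u - \nu\Delta u\big)(t,Y_t) = \big(\partial_t u - \nu\Delta u\big)(t,Y_t),
\end{equation}
the transport term $D_*Y_t\cdot\nabla u$ disappearing precisely because $D_*Y=0$. I then invoke the least action principle for the natural Lagrangian $L(x,v)=\frac{v^2}{2}-p$ exactly as in the Navier--Stokes section: for $\Lambda^1$-variations and $\mu=1$, the Global Stochastic Euler--Lagrange equation of Theorem \ref{glap} reads $\partial_x L - D_*[\partial_v L] = 0$, that is $D_*(DX_t) = -\nabla p(t,Y_t)$. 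Combining the two displays, criticality of $X$ is equivalent to $(\partial_t u - \nu\Delta u + \nabla p)(t,Y_t)=0$ almost surely.

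Finally I would convert this pathwise identity into the pointwise Stokes equation: for each $t\in(0,T)$ the law of $Y_t=\sqrt{2\nu}B_t$ is a nondegenerate Gaussian $\mathcal N\!\big(0,2\nu(T-t)\,\mathrm{Id}\big)$ with full support on $\R^d$, so the a.s. identity forces $\partial_t u - \nu\Delta u = -\nabla p$ for every $x$, and the converse implication is obtained by reading the same chain of equalities backwards. The step I expect to be the main obstacle is the appearance of the pressure: because the position slot of the functional is the \emph{fixed} bridge rather than the varied process $X$, the gradient $\nabla p$ does not survive a naive first-variation computation, and one must argue at the level of the natural Lagrangian and its least action principle (as in the Euler and Navier--Stokes cases) to let $\partial_x L = -\nabla p$ enter the Euler--Lagrange equation. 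Making this rigorous, together with the verification that the reversed bridge is genuinely driftless so that the transport term really drops, is where the substance of the proof lies.
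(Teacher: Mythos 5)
Your proposal follows essentially the same route as the paper's own (very terse) proof: state the $\Lambda^1$ least-action principle in the form $D_*DX_t=-\nabla p(\sqrt{2\nu}\,B_t)$, compute $D_*DX_t$ by the chain rule so that the transport term $\sqrt{2\nu}\,(\partial_x u)D_*B_t$ drops because $D_*B=0$, and conclude the pointwise Stokes equation from the everywhere-positive density of the bridge. The two points you single out as the real substance are handled correctly or at least no worse than in the paper: your time-reversal argument for $D_*B=0$ (the reversed bridge being a driftless Brownian motion started at $0$) is a correct proof of a fact the paper merely asserts, and the difficulty you flag about $\nabla p$ not surviving a naive first variation when the position slot is the fixed bridge is genuine but is silently glossed over by the paper, which simply asserts the Euler--Lagrange equation in the needed form.
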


\begin{proof}
The $\Lambda^1$ least action principle reads:
$$D_*DX_t=-\nabla p(\sqrt{2\nu } B_t ).$$
But 
\begin{equation}
D_*DX_t= \left( \partial_t u +\sqrt{2\nu }\  (\partial_x u)D_*B_t -\nu \Delta u \right)(t,\sqrt{2\nu } B_t).
\end{equation}
Since $D_*B=0$ and the probability density of the bridge is everywhere positive, we can deduce the desired equivalence. 

\end{proof}

\section{The temperature equations associated to the various fluid equations}

\subsection{Temperature for the Navier-Stokes equation}

As mentioned in \cite{villani}, one has to associate to the Navier Stokes equations an equation for the temperature field $\theta$. This leads to the so-called Navier-Stokes-Fourier system (see e.g. \cite{golse}):
\begin{eqnarray}\label{navier_syst}
\partial_t b  + b \cdot \nabla b  -\nu \Delta b  & = & -\nabla p, \qquad \nabla \cdot b = 0,\\
\partial_t \theta  +b\cdot \nabla \theta & = & \kappa \Delta \theta. \label{eq_temp}
\end{eqnarray}
The coefficient $\kappa$ is a priori different from the viscosity coefficient $\nu$.

We have $\nabla\cdot (\rho u)=u\cdot \nabla \rho+ \rho \nabla \cdot u$. So the incompressibility condition $\nabla\cdot b=0$ implies that the temperature equation (\ref{eq_temp}) is actually the Fokker Planck equation:
\begin{equation}\label{FK}
\partial_t \rho  =-\nabla\cdot (\rho b) + \kappa \Delta \rho,
\end{equation}
satisfied by the probability density $\rho$ of a diffusion $X^{(b,\sqrt{2\kappa})}$.

Therefore the Navier Stokes system together with the temperature equation can be described by the two following diffusions: 
\begin{itemize}
\item[$\bullet$] $X^{(u,\sqrt{2\nu})}$, which is the critical point of the natural Lagrangian $L(x,v)=\frac{v^2}{2} -p$, and
\item[$\bullet$] $X^{(u_*,\sqrt{2\kappa})}$ whose density satisfies the associated temperature equation.
\end{itemize}

\subsection{Temperature for the Stokes equation}

The Stokes equation 
\begin{equation}
\partial_t u - \nu \Delta u   = - \nabla p ,
\end{equation}
differs from the Navier-Stokes and Euler equations in the sense that it cannot be obtained as any limit of the previous ones. Moreover, the related equation on the temperature $\theta$ does not surprisingly depends on the velocity field $u$:
\begin{equation}\label{heat}
\partial_t \theta  =\kappa \Delta \theta.
\end{equation}

Based on Theorem \ref{thm_navier_temp}, this specific structure might suggest to uncouple the underlying process of the velocity field and the velocity field itself. Based on this remark, we came up to Theorem \ref{stokes_rep}. Therefore the Navier Stokes system together with the temperature equation can be described by the two following processes: 
\begin{itemize}
\item[$\bullet$] $X_t =X_0+\int_0^T u(t,\sqrt{2\nu} B_t )dt$, which is the critical point of the natural Lagrangian $L(x,v)=\frac{v^2}{2} -p$, and 
\item[$\bullet$] $\sqrt{2\kappa}\ \b B$ which is a Brownian motion and whose density satisfies the associated heat equation, that is the temperature equation (\ref{heat}).
\end{itemize}

\subsection{Temperature for the Euler equation}

Corollary \ref{euler_rep} naturally leads to the temperature equation for the Euler equation, thanks to elementary results about the transport of measure through an ordinary differential equation.

\end{document}